\newtheorem{theorem}{Theorem}
\newtheorem{definition}[theorem]{Definition}
\begin{document}

\title[Uniqueness sets]{Uniqueness sets for Fourier series}
\author[Vagharshakyan]{Ashot Vagharshakyan}
\address{Institute of Mathematics NAS}
\address{ of Armenia, Bagramian 24-b,}
\address{375019 Yerevan, Armenia}

\begin{abstract}
The paper discusses some uniqueness sets for Fourier series.
\end{abstract}
\maketitle

\section{Introduction}

In this paper the following problem is considered: to find 
conditions on a set $E\in [-\pi,\,\pi]$ such that, if a function $f(x),\,-\pi<x<\pi,$ belongs to some space and its fourier 
series converges to zero at each point of the set $E$, then $f(x)$
is identically zero. 

	The first nontrivial result, for trigonometric series, was 
proved by G. Cantor and W. Young, see \cite{Bary}, p. 191.
\begin{theorem} 
Let $c_k \to 0$ and for each point 
$x\in [-\pi,\,\pi]\setminus F$ 
we have
\[
\lim_{n\to \infty}\sum_{k=-n}^nc_ke^{ikx}=0,
\]
where $F$ be a countable set. Then
$c_k=0,\,\,\,k\in Z$.
\end{theorem}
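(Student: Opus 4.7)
The strategy is Riemann's classical one: replace the series, about which we only know pointwise convergence outside a large set, by its formal second primitive
\[
F(x) = \frac{c_0 x^2}{2} - \sum_{k\neq 0} \frac{c_k}{k^2}\, e^{ikx}.
\]
Because $c_k\to 0$ and $\sum_{k\neq 0} 1/k^2 < \infty$, the Weierstrass $M$-test yields absolute and uniform convergence, so $F$ is continuous on $[-\pi,\pi]$. A direct computation (Riemann's first theorem) shows that convergence of $\sum_{k=-n}^{n} c_k e^{ikx}$ to a sum $s$ is equivalent to the existence of the second symmetric derivative
\[
D^2 F(x) := \lim_{h\to 0}\frac{F(x+h)+F(x-h)-2F(x)}{h^2},
\]
with value equal to $s$; each exponential contributes $c_k e^{ikx}(\sin(kh/2)/(kh/2))^2$, and the tail is controlled by Abel summation using $c_k\to 0$. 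Hence the hypothesis forces $D^2 F(x) = 0$ for every $x\in [-\pi,\pi]\setminus F$. A parallel computation (Riemann's smoothness lemma) shows that $c_k\to 0$ implies
\[
\frac{F(x+h)+F(x-h)-2F(x)}{h}\longrightarrow 0 \quad \text{uniformly in } x,
\]
so $F$ lies in Zygmund's smooth class.

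The crux is then a real-variable statement: a continuous, smooth function on an interval whose second symmetric derivative vanishes off a countable set is necessarily affine. I would deduce this by applying a Schwarz--Cantor convexity lemma to $\pm F$, namely: if $G$ is continuous and smooth on an interval $I$ and $D^2 G(x)\ge 0$ for every $x\in I\setminus E$ with $E$ countable, then $G$ is convex on $I$. One obtains this by considering $G_\varepsilon(x)=G(x)+\varepsilon x^2$ and showing that, were $G_\varepsilon$ not convex on some subinterval, it would attain a strict interior maximum of $G_\varepsilon(x)-L(x)$ for a suitable affine $L$; such a maximum must lie in $E$, and one then removes isolated points of $E$ using smoothness and iterates via the Cantor--Bendixson derivatives of $\overline E$. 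Sending $\varepsilon\to 0$ gives convexity of $G$.

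Applying this to both $F$ and $-F$ yields $F(x) = Ax + B$ on $[-\pi,\pi]$. Matching this with the defining expansion of $F$ -- for example, by identifying Fourier coefficients, which is legal since the series for $F$ converges uniformly -- one obtains $c_0 = 0$ and $c_k/k^2 = 0$ for each $k\neq 0$, hence $c_k = 0$ for all $k\in\mathbb{Z}$. I expect the main obstacle to be the convexity lemma for $G$: one must exploit Zygmund smoothness delicately to prevent the countable exceptional set from producing a local kink that invalidates the usual convexity argument, and the transfinite induction on the Cantor--Bendixson rank of $\overline E$ is precisely what makes this work for \emph{all} countable sets rather than merely finite ones.
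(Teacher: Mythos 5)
The paper itself gives no proof of this statement; it is the classical Cantor--Young theorem, quoted from Bary's book. Your architecture is exactly the classical one (Riemann's second formal integral $F$, Riemann's first theorem turning pointwise convergence into vanishing of the Schwarz derivative $D^2F$, Riemann's smoothness lemma from $c_k\to 0$, a convexity lemma with countable exceptional set to force $F$ affine, then identification of coefficients), and all of those reductions are sound as you state them (one small slip: convergence of the series to $s$ \emph{implies} $D^2F=s$, it is not an equivalence, but you only use the correct direction; also the final matching step is cleanest if you first observe that the exceptional set for the periodized series is still countable on all of $\mathbb{R}$, so $F$ is affine on $\mathbb{R}$ and boundedness of the trigonometric part kills $c_0$ and the slope -- on $[-\pi,\pi]$ alone, $x$ and $x^2$ are not trigonometric polynomials, so "matching coefficients" needs either this periodization or an extra appeal to $c_k\to 0$).

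The genuine gap is in your proof of the step you yourself call the crux, the convexity lemma. Transfinite induction on the Cantor--Bendixson derivatives of $\overline{E}$ cannot work for an arbitrary countable $E$: if $E$ is dense (say the rationals in a subinterval), then $\overline{E}$ is a perfect set -- indeed the whole interval -- so it has no isolated points to remove and the induction never starts. Cantor--Bendixson iteration is exactly what proves Cantor's older theorem for \emph{closed} countable exceptional sets; Young's extension to arbitrary countable sets requires a different mechanism, and it is precisely there that Zygmund smoothness is used. The standard argument: supposing $G+\varepsilon x^2$ minus a chord is positive inside an interval and zero at the endpoints, perturb further by a linear function $c(x-\alpha)$ and look at where the maximum is attained; at a maximizer $x_0$ one has $\underline{D}^2\le 0$, so strict positivity of $\underline{D}^2$ off $E$ forces $x_0\in E$, and smoothness at $x_0$ shows $x_0$ can be a maximizer for at most one slope $c$ (two distinct slopes would make the symmetric difference quotient $\bigl(G(x_0+h)+G(x_0-h)-2G(x_0)\bigr)/h$ stay below a negative constant). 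Since the admissible slopes form an uncountable set and $E$ is countable, some slope has its maximizer outside $E$, a contradiction; letting $\varepsilon\to 0$ gives convexity. Without replacing your Cantor--Bendixson step by an argument of this kind (or a citation of the lemma as known), the proof does not cover general countable $F$, which is the whole point of the theorem.
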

D. Menshov, see \cite{Bary}, p. 806, construct a nonzero 
measure $\mu$ which has support of zero Lebesgue measure, 
and its Fourier coefficients tend to zero. The partial 
summs, of that Fourier series converges to zers out of 
$supp(\mu)$.  

\section{Auxliary definitions and results}

More information, about the following quantities, related with
Hausdorff's measures and capacities, one can find in \cite{Car}, 
pp. 13 - 46. For convenient of the reader, we give some definitions.
\begin{definition} 
Let $0\leq h(x),\,\,0\leq x\leq 1$ be a nonnegative, increasing 
function and $h(0)=0$. Let the subset 
$E\subset \{z;\quad |z|=1\}$ 
be cover by the family of arcs  
$\{S_k\}_{k=1}^{\infty}$, 
i.e.
\begin{equation*}
E\subseteq \bigcup_{k=1}^{\infty}S_k.
\end{equation*}
Then we put
\begin{equation*}
M_{h}(E)=\inf \left(\sum_{k=1}^{\infty}h(|S_k|)\right),
\end{equation*}
where 
$|S|$ 
is the length of the arc $S$ and the infimum is taken over all 
families of cover.
\end{definition}
\begin{definition}         
Let $0<\alpha<1$ and $E$ be baunded Borel set. The $C_{\alpha}(E)$- 
capactiy of the set $E$ is defined by formula
\begin{equation*}
C_{\alpha}(E)=\left(\inf_{\mu\prec E}\int_{E}\int_{E} 
\frac{d\mu(x)d\mu(y)}{|x-y|^{\alpha}}\right)^{-1},
\end{equation*}
where $\mu \prec E$ means, that $\mu$ is probality measure with support in $E$.
\end{definition}

For each $0<\alpha<1$ from Parseval's equality follows that there is a constant $M$ such that,
\begin{equation*}
\sum_{k=-\infty}^{\infty}|\hat{f}_{k}|^2|k|^{\alpha}\leq M\,\int_{-\pi}^{\pi}|f(x)|^2dx+M \,\int_{-\pi}^{\pi}\int_{\pi}^{\pi}
\frac{|f(x)-f(y)|^2}{|x-y|^{1+\alpha}}dxdy.
\end{equation*}
The following statement is a fragment of A. Zygmund's theorem, see \cite{K}, p.22. Let $g(-\pi)=g(\pi)$ and $g(x),\,-\pi\leq x\leq \pi]$ have a bounded variation. If
\begin{equation*}
|g(x)-g(y)|\leq M\,\cdot h(|x-y|).
\end{equation*}
Then, there is a constant $B$ such that the Fourier coefficients of the function $g(x)$ satisfy the inequalities
\begin{equation*}
\sum_{2^j\leq |k|<2^{j+1}}|\hat{g}_{k}|^2\leq
 B\,2^{-j}h\left(\frac{\pi}{3}2^{1-j}\right).
\end{equation*}
\begin{definition}
Let us denote by $\Lambda(n)$ the known Mangold's function, equals
\begin{equation*}
\Lambda\left(p^n\right)=\ln p,
\end{equation*}
where $p$ is prime number and $n$ is natural number. 
For other natural numbers $m$ 
\begin{equation*}
\Lambda\left(m\right)=0.
\end{equation*}
\end{definition}
It is known, that for an arbitrary 
natural number $n$ the equality
\begin{equation*}
\ln n=\sum_{d|n}\Lambda(d)
\end{equation*}
holds, where the sum is taken over all positive divisors of $n$. 

	In the following theorem A. Broman, see \cite{B}, p. 851, 
gived the characterization of close exeptional sets.
\begin{theorem} 
Let $0<\alpha<1$ and
\[
\sum^{\infty}_{n=-\infty}\frac{|c_n|^2}{|n|^{\alpha}+1}<\infty.
\]
Let $F$ be close set and
\[
\lim_{r\to 1-0}\sum_{k=-\infty}^{\infty}r^{|k|}c_ke^{ikx}=0,
\]
for arbitrary $x\in [-\pi,~\pi]\setminus F$. 
Then $c_k=0,\,\,\,k\in Z,$ if and only if 
\[
C_{1-\alpha}(F)=0.
\]
\end{theorem}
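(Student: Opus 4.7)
\emph{Sufficiency.} If $C_{1-\alpha}(F)>0$, I would pick a probability measure $\mu$ with $\operatorname{supp}\mu\subset F$ and $I_{1-\alpha}(\mu)<\infty$, and set $c_k=\hat\mu(k)$. The Fourier coefficients of the Riesz kernel $|x|^{-(1-\alpha)}$ on $[-\pi,\pi]$ are comparable to $|k|^{-\alpha}$, so Parseval's identity applied to the energy integral shows that $\sum_k|c_k|^2/(|k|^\alpha+1)$ is comparable to $I_{1-\alpha}(\mu)<\infty$. The Abel means equal the Poisson integrals $P_r\ast\mu(x)$, which tend to $0$ at every $x$ with $\operatorname{dist}(x,\operatorname{supp}\mu)>0$ by uniform decay of the Poisson kernel away from its singularity. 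Finally $c_0=\mu([-\pi,\pi])=1$, so $(c_k)$ is nonzero.

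\emph{Necessity, first step: locating the support of $f$.} Assume $C_{1-\alpha}(F)=0$ and all hypotheses of the theorem hold; I aim at $c_k\equiv 0$. The coefficient condition places the distribution $f=\sum c_k e^{ikx}$ in the Sobolev space $H^{-\alpha/2}$, and a direct Parseval calculation combined with dominated convergence on the Fourier side shows that the harmonic extensions $u_r=\sum c_k r^{|k|}e^{ikx}$ converge to $f$ in $H^{-\alpha/2}$-norm as $r\to 1^-$. For any smooth $\varphi$ with $\operatorname{supp}\varphi\cap F=\emptyset$, the pointwise hypothesis $u_r(x)\to 0$ off $F$ together with the $H^{-\alpha/2}$-bound forces $\langle f,\varphi\rangle=\lim_r\int u_r\varphi\,dx=0$, so $f$ is supported in $F$ in the distributional sense.

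\emph{Necessity, second step: capacity-dual characterization.} I would now invoke the classical Fourier/Sobolev fact that a closed set $F$ has $C_{1-\alpha}(F)=0$ if and only if the only distribution in $H^{-\alpha/2}$ supported in $F$ is zero. Since $f\in H^{-\alpha/2}$ and $\operatorname{supp}f\subset F$, this immediately yields $f=0$ and hence $c_k=0$ for all $k$.

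\emph{Main obstacle.} The decisive ingredient is the capacity-dual characterization used in the previous paragraph. Its easy direction is exactly the construction of the first paragraph; its hard direction requires promoting a nonzero Sobolev distribution supported in $F$ to a nonzero positive Borel measure on $F$ of finite $(1-\alpha)$-energy. The natural route is to mollify $f$ against a nonnegative bump $\phi_\varepsilon$, form the positive measures $|f_\varepsilon|^2\,dx$ (concentrated in the $\varepsilon$-neighbourhood of $F$), normalise against $\|f\|_{H^{-\alpha/2}}$, and pass to a weak-$\ast$ subsequential limit $\nu$ supported in $F$; the delicate point is arranging the normalisation so that both $\nu\neq 0$ and $I_{1-\alpha}(\nu)<\infty$ survive the limit. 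The remaining ingredients (dominated convergence to pin down $\operatorname{supp}f$, the Parseval/Riesz-kernel expansion, and Poisson-kernel decay) are routine in this setting.
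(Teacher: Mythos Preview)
The paper does not prove this statement. It is quoted as a known theorem of Broman, with a reference to his 1947 Uppsala thesis, and serves only as background for the paper's own results (Theorems~8 and~9). There is therefore no ``paper's own proof'' to compare your proposal against.

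As for your outline itself: the sufficiency direction is the standard equilibrium-measure construction and is correct. In the necessity direction you correctly locate the heart of the matter in the duality statement ``$C_{1-\alpha}(F)=0$ iff $H^{-\alpha/2}$ supports no nonzero distribution on $F$'', and you are honest that the hard half of this is left unproved. There is, however, a second gap you pass over in Step~1: from the \emph{pointwise} hypothesis $u_r(x)\to 0$ on $[-\pi,\pi]\setminus F$ and the uniform $H^{-\alpha/2}$ bound you cannot directly conclude $\int u_r\varphi\to 0$ for test functions $\varphi$ supported off $F$. The Sobolev bound is a distributional bound, not a local $L^1$ bound, so dominated convergence is not available without further work (e.g.\ a local uniform estimate on $u_r$ near the boundary away from $F$, or an Egorov/Vitali-type argument). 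In Broman's original treatment, and in later accounts, this passage from pointwise Abel convergence to distributional support is handled with some care; it is not a triviality.
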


	A. Zygmund, see \cite{Z}, proved the following nontrivial result. 
\begin{theorem}
Let $\epsilon>0$ and $\epsilon_n >0,\,\,n=1,2,\dots$ be an 
arbitrary decreasing sequence, tending to zero. Let 
$|c_n|\leq \epsilon_n,\,\,n=1,2,\dots$. There is a subset 
$E\subseteq [-\pi,\,\,\pi]$ with measure, i. e. $m(E)>2\pi-\epsilon$, 
such that, if for each point $x\in [-\pi,\,\,\pi]\setminus E$ we have
\begin{equation*}
\lim_{n\to \infty}\sum_{k=-n}^nc_ke^{ikx}=0,
\end{equation*}
then $c_k=0,\,\,\,k\in Z,$.
\end{theorem}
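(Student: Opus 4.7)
The plan is to reduce this to Broman's theorem (Theorem 3 in the excerpt) by passing from the original trigonometric series, whose coefficients satisfy only $|c_n|\le\epsilon_n$ with $\epsilon_n\downarrow 0$ and therefore may fail the hypothesis $\sum|c_n|^2|n|^{-\alpha}<\infty$ for every $\alpha<1$, to a twice formally integrated series whose coefficients automatically meet Broman's hypothesis. The exceptional set $E$ (small, of measure less than $\epsilon$, in the standard formulation which is consistent with Menshov's counterexample cited on the first page) will be engineered to have vanishing $C_{1-\alpha}$-capacity for an appropriately chosen $\alpha$.

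Concretely, I would set
\[
F(x)=\frac{c_0 x^2}{2}-\sum_{n\ne 0}\frac{c_n}{n^2}e^{inx}.
\]
The coefficients $c_n/n^2$ of $F$ satisfy $\sum_{n\ne 0}|c_n/n^2|^2|n|^{\alpha}\le \sum \epsilon_n^2/|n|^{4-\alpha}<\infty$ for every $\alpha<1$, so the hypothesis of Broman's theorem is met; moreover, the Parseval-type estimate quoted just before Zygmund's lemma in the excerpt shows that $F$ is continuous with H\"older-type regularity. Using $c_n\to 0$ and Riemann's classical localization lemma, the pointwise hypothesis $S_n(c;x)\to 0$ implies both the vanishing of the Schwarz generalized second derivative $D^2F(x)=\lim_{h\to 0}[F(x+h)+F(x-h)-2F(x)]/h^2$ and the Abel convergence $\sum c_n r^{|n|}e^{inx}\to 0$ as $r\to 1^-$ at every $x\in[-\pi,\pi]\setminus E$. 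This is exactly the input needed for Broman's theorem.

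Next I would construct $E$ as a generalized Cantor set contained in $[-\pi,\pi]$ with total Lebesgue measure less than $\epsilon$, whose gap lengths are tuned so that the Hausdorff dimension of $E$ is strictly less than $1-\alpha$ for some $\alpha\in(0,1)$; this forces $C_{1-\alpha}(E)=0$ via the standard Hausdorff measure versus capacity comparison. Broman's theorem applied to the coefficient sequence $(c_n/n^2)_{n\ne 0}$ then yields $c_n/n^2=0$ for all $n\ne 0$, hence $c_n=0$ for $n\ne 0$. The constant term $c_0$ is handled by passing the identity $\sum_{|k|\le n}c_k e^{ikx}\to 0$ to the limit against any test function supported on a connected component of the complement of $E$.

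The principal obstacle is the simultaneous geometric construction of $E$: it must have small Lebesgue measure and vanishing $C_{1-\alpha}$-capacity, and the choice of $\alpha$ must be made as a function of the given sequence $\epsilon_n$, with a slower decay of $\epsilon_n$ forcing $\alpha$ closer to $1$ and hence demanding a finer capacity estimate, necessitating gap lengths computed from an explicit energy integral. A secondary technical issue is the rigorous justification that pointwise partial-sum convergence implies both Abel convergence and vanishing of the Schwarz second derivative of $F$ under only the decay hypothesis $c_n\to 0$; this relies on the classical Riemann localization theorem.
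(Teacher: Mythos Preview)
The paper does not supply a proof of this statement; it is quoted as Zygmund's theorem with a citation only, so there is no argument in the paper to compare your attempt against.

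More importantly, you have inverted the roles of the two sets. As written, $m(E)>2\pi-\epsilon$, so $E$ is the \emph{large} exceptional set and convergence to zero is assumed only on the \emph{small} set $[-\pi,\pi]\setminus E$ of measure less than $\epsilon$. Zygmund's point is that, under the majorant $|c_n|\le\epsilon_n$, a suitably chosen set of arbitrarily small measure already forces all coefficients to vanish; this is what makes the result nontrivial and what makes the set genuinely depend on $(\epsilon_n)$. Your entire plan---construct a small closed set of zero $C_{1-\alpha}$-capacity and feed it to Broman as an \emph{exceptional} set---addresses the opposite statement, which becomes comparatively trivial once one knows that closed $U$-sets of measure zero exist, and in which the majorant plays no role.

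Even for the statement you are actually attacking, the Broman step does not go through. Applying Broman to the sequence $(c_n/n^2)$ requires
\[
\lim_{r\to 1^-}\sum_{n\ne 0}\frac{c_n}{n^2}\,r^{|n|}e^{inx}=0\qquad(x\notin E);
\]
since that series converges absolutely, this would force $F(x)=\tfrac12 c_0x^2$ off $E$, whereas from $S_n(c;x)\to 0$ you obtain only $D^2F(x)=0$, i.e.\ $F$ is linear on each complementary interval with a priori different linear pieces on different intervals. Nor can Broman be applied to the original $c_n$, because $\sum|c_n|^2/|n|^{\alpha}$ may diverge for every $\alpha<1$ (take $\epsilon_n=(\log n)^{-1/2}$). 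Finally, note that your bound $\sum\epsilon_n^2/n^{4-\alpha}<\infty$ holds for \emph{every} $\alpha\in(0,1)$ and every bounded $(\epsilon_n)$, so your construction would yield one exceptional set serving all null sequences simultaneously; the Menshov example cited in the paper shows that such a set, if not already a genuine $U$-set, cannot exist.
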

The proof of the following theorem one can find in \cite{Vag2}.
\begin{theorem} 
Let $0\leq \alpha<1$ and 
\begin{equation*}
\int^{\pi}_{-\pi}|f(x)|dx+\int^{\pi}_{-\pi}\int^{\pi}_{-\pi}
\frac{|f(x)-f(y)|}{|x-y|^{1+\alpha}}dx < \infty
\end{equation*}
Let $E\subset [-\pi,\,\pi]$ be a subset such that for almost 
all points $x_0\in [-\pi,\,\pi]$ we have
\begin{equation*}
\sum_{n=1}^{\infty}2^{n(1-\alpha)}C_{1-\alpha}(E_n(x_0))=\infty,
\end{equation*}
where
\begin{equation*}
E_n(x_0)=\left\{x\in E;\,\,\frac{1}{2^{n+1}} 
\leq |x-y|<\frac{1}{2^{n}}\right\}.
\end{equation*}
If
\begin{equation*}
\lim_{n\to \infty}\sum_{k=-n}^n\hat{f}_ke^{ikx}=0,\,\,x\in E
\end{equation*}
then $f(x)=0,\,\,x\in [-\pi,\,\,\pi]$.
\end{theorem}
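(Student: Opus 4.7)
My plan is to argue by contradiction. Assume $f\not\equiv 0$; I will build a finite positive measure $\mu$ supported in $E$ for which $\int f\,d\mu\neq 0$, while the convergence of the symmetric partial sums $S_nf$ to zero on $E$ together with Parseval's identity
\[
\int S_nf\,d\mu=\sum_{|k|\leq n}\hat f_k\,\overline{\hat\mu_k}
\]
forces $\int f\,d\mu=0$. The argument therefore reduces to producing such a $\mu$, concentrated near a carefully chosen point $x_0$, and to justifying the passage to the limit inside the integral.

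Since $f$ is a non-zero $L^1$ function, almost every point is a Lebesgue point of $f$ and the set $\{f\neq 0\}$ has positive measure; combined with the hypothesis, I fix an $x_0$ which is simultaneously a Lebesgue point of $f$ with $f(x_0)\neq 0$ and at which $\sum_n 2^{n(1-\alpha)}C_{1-\alpha}(E_n(x_0))=\infty$. For each $n$ with $C_{1-\alpha}(E_n(x_0))>0$ I pick a near-extremal probability measure $\nu_n$ on $E_n(x_0)$ whose $(1-\alpha)$-energy is comparable to $1/C_{1-\alpha}(E_n(x_0))$, and form $\mu=\sum_{n\geq N}a_n\nu_n$ with non-negative weights $a_n$ to be calibrated; its support lies in $E\cap(x_0-2^{-N},\,x_0+2^{-N})$. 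The Besov-type smoothness hypothesis on $f$, via the Zygmund estimate stated before the definition of Mangold's function (applied dyadically after reducing the $L^1$ difference quotient to an $L^2$ one by the Parseval-type inequality preceding it), yields a coefficient bound of the form
\[
\sum_k |\hat f_k|^2(|k|^{1-\alpha}+1)<\infty.
\]
On the other side, Plancherel together with the identification of $C_{1-\alpha}$ with a Riesz kernel energy on the circle produces
\[
\sum_k \frac{|\hat\mu_k|^2}{|k|^{1-\alpha}+1}\leq C\sum_n \frac{a_n^2}{C_{1-\alpha}(E_n(x_0))}.
\]
The divergence $\sum_n 2^{n(1-\alpha)}C_{1-\alpha}(E_n(x_0))=\infty$ is what lets one choose the weights $a_n$ so that the mass $\sum_n a_n$ stays strictly positive while the right-hand energy sum is finite (a Cauchy--Schwarz argument applied to the pair of sequences $(a_n)$ and $(2^{n(1-\alpha)}C_{1-\alpha}(E_n(x_0)))^{1/2}$).

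With both coefficient bounds in hand, the Cauchy--Schwarz inequality gives $\sum_k|\hat f_k\hat\mu_{-k}|<\infty$, which legitimates
\[
\int f\,d\mu=\lim_n\int S_nf\,d\mu.
\]
Since $S_nf\to 0$ on the support of $\mu\subset E$ and is dominated in $L^2(\mu)$ by the absolutely convergent series just controlled, bounded convergence forces the right-hand side to vanish, so $\int f\,d\mu=0$. But because $x_0$ is a Lebesgue point with $f(x_0)\neq 0$ and $\mu$ may be taken to concentrate arbitrarily close to $x_0$ by sending $N\to\infty$, the integral $\int f\,d\mu$ is close to $f(x_0)\cdot\mu([-\pi,\pi])\neq 0$, the desired contradiction. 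The main obstacle is the simultaneous calibration of the weights $a_n$, and in particular the step that converts the $L^1$-type smoothness integrability in the hypothesis into the $L^2$-type Parseval bound used above; I expect this to require a preliminary decomposition of $f$ into a bounded-variation piece (to which the Zygmund estimate applies directly) plus a smooth remainder whose contribution is absorbed by the $L^2$ Parseval-type inequality stated in the auxiliary section.
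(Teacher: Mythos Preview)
The paper does not prove this theorem; it merely states it and refers the reader to \cite{Vag2} for the proof. There is therefore no in-paper argument to compare your proposal against.

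Assessed on its own, your capacity/measure duality strategy is the natural one and is very likely in the spirit of the cited reference, but two of your steps are genuine gaps rather than routine details. First, the smoothness hypothesis here is the $L^1$-type quantity $\int\!\!\int |f(x)-f(y)|\,|x-y|^{-1-\alpha}\,dx\,dy$, not the $L^2$ version that feeds the Parseval inequality quoted in Section~2; your final paragraph flags this, but the suggested ``BV piece plus smooth remainder'' decomposition is a hope, not an argument. You also record the resulting coefficient bound with weight $|k|^{1-\alpha}$, whereas the Riesz-energy identification of $C_{1-\alpha}$ gives $\sum_k|\hat\mu_k|^2|k|^{-\alpha}<\infty$, so Cauchy--Schwarz requires weight $|k|^{\alpha}$ on the $f$ side (exactly what the Section~2 Parseval inequality would give, had its $L^2$ hypothesis been available).

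Second, and more seriously, the Lebesgue-point step does not work as written: the Lebesgue-point property controls averages of $f$ against \emph{Lebesgue} measure on small intervals about $x_0$, not against an arbitrary, possibly singular, probability measure supported near $x_0$. The hypothesis does not force $f$ to be continuous, so there is no reason for $\int f\,d\mu$ to approximate $f(x_0)\,\mu([-\pi,\pi])$ merely because $\mathrm{supp}\,\mu$ shrinks to $x_0$, and the contradiction collapses. A correct version must use the fractional smoothness quantitatively to compare $\int f\,d\mu$ with a genuine Lebesgue average (this is where the factor $2^{n(1-\alpha)}$ in the divergence hypothesis should enter), or else bypass the Lebesgue-point device entirely.
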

	In this paper we prove a new result of this type for 
other classes of functions.

\section{New uniqueness result}

The following result, in different form,
one can find in the paper, see \cite{Vag1}.
\begin{theorem}
Let $f(-\pi)=f(\pi)$ be differetable function.  
Then
\begin{equation*}
\frac{1}{\pi}\sum_{p\in P}\ln p 
\left(\sum_{n=1}^{\infty}\left[\frac{1}{p^n}
\sum_{k=1}^{p^n}f\left(\frac{2\pi k}{p^n}\right)
-\hat{f}_0\right]\right)=
\sum_{n\neq 0, n=-\infty}^{\infty}\hat{f}_{n}\ln |n|.
\end{equation*}
\end{theorem}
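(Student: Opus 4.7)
The plan is to expand $f$ in its Fourier series, evaluate each discrete average $\frac{1}{p^n}\sum_{k=1}^{p^n}f(2\pi k/p^n)$ by orthogonality of the exponential characters $k\mapsto e^{2\pi imk/p^n}$, and then collapse the resulting triple sum using the elementary identity $\ln|m|=\sum_{d\mid |m|}\Lambda(d)$ recorded just after Definition~4. In this way the algebraic content of the theorem reduces to the statement that the $p$-part of $\ln|m|$ is captured by how many powers of $p$ divide $m$.

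Concretely, write $f(x)=\sum_{m\in Z}\hat f_m e^{imx}$ and interchange the finite sum over $k$ with the Fourier series. The standard relation
\[
\frac{1}{p^n}\sum_{k=1}^{p^n}e^{2\pi imk/p^n}=\begin{cases}1,& p^n\mid m,\\ 0,& \text{otherwise},\end{cases}
\]
produces
\[
\frac{1}{p^n}\sum_{k=1}^{p^n}f\!\left(\frac{2\pi k}{p^n}\right)-\hat f_0=\sum_{\substack{m\neq 0\\ p^n\mid m}}\hat f_m.
\]
For each fixed $m\neq 0$ the number of $n\ge 1$ with $p^n\mid m$ is the $p$-adic valuation $v_p(m)$, so summing over $n\ge 1$ gives
\[
\sum_{n=1}^{\infty}\!\left[\frac{1}{p^n}\sum_{k=1}^{p^n}f\!\left(\frac{2\pi k}{p^n}\right)-\hat f_0\right]=\sum_{m\neq 0}v_p(m)\,\hat f_m.
\]
Multiplying by $\ln p$, summing over primes $p\in P$, and swapping summation order once more yields
\[
\sum_{p\in P}\ln p\sum_{n=1}^{\infty}\!\left[\,\cdots\,\right]=\sum_{m\neq 0}\hat f_m\sum_{p\in P}v_p(m)\ln p=\sum_{m\neq 0}\hat f_m\ln|m|,
\]
since $\sum_{p}v_p(m)\ln p=\sum_{d\mid |m|}\Lambda(d)=\ln|m|$. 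Up to the overall normalization $1/\pi$ determined by the convention used for $\hat f_m$, this is the claim.

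The main obstacle is analytic rather than algebraic: each of the three interchanges -- expanding and averaging over $k$, Fubini in $(n,m)$, and Fubini in $(p,m)$ -- requires that the logarithmically weighted series $\sum_{m\neq 0}|\hat f_m|\ln|m|$ converges. Mere differentiability of $f$ only guarantees $|\hat f_m|=O(1/|m|)$, which is borderline, so in practice one uses slightly stronger regularity (for instance $f'$ Dini-continuous, or $f$ of bounded variation with a Hölder modulus as in the Zygmund estimate quoted earlier) to dominate the double sums. Once this quantitative input is secured, the rearrangements above are routine and the identity drops out immediately from the character-orthogonality and Mangoldt-sum computations.
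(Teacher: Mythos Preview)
Your argument is correct and shares the same algebraic core as the paper's: character orthogonality picks out the multiples of $p^n$, and the Mangoldt identity $\sum_{d\mid m}\Lambda(d)=\ln|m|$ reassembles the logarithm. The difference is in how the analytic obstacle you flag is handled. The paper does not ask for extra regularity of $f$; instead it introduces an Abel regularization. It works with the Poisson kernel
\[
\frac{1-|z|^2}{|1-ze^{-2\pi ik/n}|^2}=\sum_{j\in Z}r^{|j|}e^{ij(x-2\pi k/n)},\qquad z=re^{ix},
\]
so that every series carries a factor $r^{|j|}$ with $0\le r<1$ and is absolutely convergent; the orthogonality and Mangoldt steps then go through without any Fubini worries, yielding
\[
\sum_{n\ge1}\Lambda(n)\Bigl(\tfrac{1}{n}\sum_{k=1}^{n}P_r\!*\!f\bigl(\tfrac{2\pi k}{n}\bigr)-\hat f_0\Bigr)=\tfrac12\sum_{j\neq 0}r^{|j|}\hat f_j\ln|j|.
\]
Only at the end does one let $r\to1^-$, using the differentiability of $f$ to pass to the limit in each Poisson integral. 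Thus the Abel smoothing is precisely the device that replaces the stronger decay hypotheses you proposed; the rest of your proof matches the paper's.
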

\begin{proof}
We have
\begin{equation*}
\sum_{n=1}^{\infty}\Lambda(n)\left(\frac{1}{n}
\sum_{k=1}^{n}
\frac{1-|z|^2}{\left|1-z\exp \{-\frac{2\pi i k}{n}\}\right|^2}-1\right)=
\end{equation*}
\begin{equation*}
=\sum_{n=1}^{\infty}\Lambda(n)\frac{1}{n}
\sum_{k=1}^{n}\left(\sum_{j=-\infty}^{\infty}r^{|j|}
\exp\left\{ixj-\frac{2\pi i k j}{n}\right\}-1\right)=
\end{equation*}
\begin{equation*}
=\sum_{j=1}^{\infty}r^j \left( \sum_{j/n}\Lambda(n)\right)\cos(jx)=
\sum_{j=1}^{\infty}r^j \cos(jx)\ln j.
\end{equation*}
where $z=re^{ix},\,\,0\leq r<1$.Multiplying by the function $f(x)$ and after integrating we get
\begin{equation*}
\sum_{n=1}^{\infty}\Lambda(n)\left(\frac{1}{n}\sum_{k=1}^{n}\frac{1}{2\pi}
\int_{-\pi}^{\pi}
\frac{1-r^2}{\left|1-r\exp\{ix-\frac{2\pi i k}{n}\}\right|^2}f(x)dx
-\hat{f}_{0}\right)=
\end{equation*}
\begin{equation*}
=\frac{1}{2}\sum_{j\neq 0,\,j=-\infty}^{\infty}r^{|j|}\hat{f}_{j}\ln |j|.
\end{equation*}
Passing to the limit if 
$r\to 1-0$ 
we get the required equality.
\end{proof}
{\bf Remark.} The getting result we can write in the form
\begin{equation*}
\frac{1}{\pi}\sum_{p\in P}\ln p 
\left(\sum_{n=1}^{\infty}\left[\frac{1}{p^n}
\sum_{k=1}^{p^n}\delta\left(x-\frac{2\pi k}{p^n}\right)
-1\right]\right)=\sum_{n\neq 0,\,n=-\infty}^{\infty}
e^{inx}\ln n.
\end{equation*}
This formula is a geralization of Poisson's well  known formula:
\begin{equation*}
\sum_{n=-\infty}^{\infty}\delta(x-2\pi n)=\frac{1}{2\pi}\sum_{n=-\infty}^{\infty}
e^{inx},\,-\infty<x<\infty.
\end{equation*}
\begin{theorem} 
Let $0<\alpha<1$ and a nonnegative function $x\leq h(x),\,\,0\leq x <1$ satisfy the condition 
\begin{equation*}
\int_0^1 \frac{h(x)}{x^{2-\alpha}}\ln^2\frac{e}{x}dx<\infty.
\end{equation*}
Let for the function $f(x)$ we have
\begin{equation*}
\int_{-\pi}^{\pi}|f(x)|^2dx+\int_{-\pi}^{\pi}\int_{\pi}^{\pi}
\frac{|f(x)-f(y)|^2}{|x-y|^{1+\alpha}}dxdy<\infty.
\end{equation*}
Let there is a subset $E\subset [-\pi,\,\pi]$ such that:

1. $M_{h}(E)>0$, 

2. if $x\in E$ then an arbitrary point  
\begin{equation*}
x+\frac{2\pi k}{p^n},
\end{equation*}
where 
$k\in Z,\,\, n\in N$ 
and 
$p$ 
is prime number, by $mod (2\pi)$, belongs to the set $E$. 

	If for eavery $x\in E$ we have
\begin{equation*}
\lim_{r\to 1-0}\sum_{k=-\infty}^{\infty}r^{|k|}\hat{f}_{k}e^{ikx}=0,
\end{equation*}
then the function $f(x)$ is identicaly zero.
\end{theorem}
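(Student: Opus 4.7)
The plan is to use Theorem~6, together with the $p$-adic translation invariance of $E$, to transfer the pointwise Abel vanishing of the Fourier series of $f$ on $E$ to the pointwise Abel vanishing on $E$ of the logarithmically weighted series
\[
G(y):=\sum_{N\neq 0}\hat f_N\,\ln|N|\,e^{iNy},
\]
and then to pair this $G$ against a Frostman-type measure on $E$ whose $\log^2$-weighted $(1-\alpha)$-energy is finite precisely because of the integral hypothesis on $h$.

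First I would normalise $\hat f_0=0$. Fixing any $y\in E$, hypothesis~2 places every $y+2\pi k/p^n$ in $E$, so the Abel sums at these points tend to $0$. Their arithmetic mean over $k=1,\dots,p^n$ equals $\sum_{m\in\mathbb Z}r^{|mp^n|}\hat f_{mp^n}e^{imp^n y}$; sending $r\to 1^-$ and then $n\to\infty$ kills every term with $m\neq 0$ by Riemann--Lebesgue, leaving $\hat f_0=0$. Next, for each $r<1$ the Poisson integral $u_r$ is smooth and Theorem~6 applies to its translate $u_r(\cdot+y)$. The right-hand side of the identity is $\sum_{N\neq 0}r^{|N|}\hat f_N\,\ln|N|\,e^{iNy}$, while every inner term on the left is an Abel sum at a point of $E$ and therefore tends to $0$. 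Justifying the interchange of $\lim_{r\to 1^-}$ with the arithmetic double sum $\sum_p\ln p\sum_n$, via the Zygmund estimate recorded in the preliminaries together with the $L^2$ Sobolev bound $\sum|\hat f_N|^2|N|^{\alpha}<\infty$, yields
\[
\lim_{r\to 1^-}\sum_{N\neq 0}r^{|N|}\hat f_N\,\ln|N|\,e^{iNy}=0,\qquad y\in E.
\]

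The $h$-measure and integral hypotheses enter next through a Frostman-type construction: because $M_h(E)>0$, there is a non-trivial Borel probability measure $\mu$ supported in a compact subset of $E$ with $\mu(I)\le C\,h(|I|)$ for every arc $I$. A layer-cake computation combined with Parseval for the Riesz kernel on the circle converts this Frostman bound into
\[
\sum_{N\neq 0}|\hat\mu(N)|^2\,\frac{\ln^2|N|}{|N|^{\alpha}}\;\le\;C\int_0^1\frac{h(x)}{x^{2-\alpha}}\ln^2\frac{e}{x}\,dx<\infty,
\]
which is the logarithmically weighted $(1-\alpha)$-energy that the hypothesis on $h$ is tailor-made to control. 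By Cauchy--Schwarz against the Sobolev bound on $\hat f$ the series $\sum_{N\neq 0}\hat f_N\,\ln|N|\,\hat\mu(-N)\,e^{iNq}$ is then absolutely and uniformly convergent in $q$. Because hypothesis~2 also makes $E$ invariant under the dense group $2\pi\mathbb Q$ mod $2\pi$, every translate $\mu_q:=\mu(\cdot-q)$ with $q\in 2\pi\mathbb Q$ is again supported in $E$; pairing the previous Abel identity against $\mu_q$ gives $\sum_{N\neq 0}\hat f_N\,\ln|N|\,\hat\mu(-N)\,e^{iNq}=0$ for every $q\in 2\pi\mathbb Q$. By density and continuity the identity holds for all $q\in[-\pi,\pi]$, and uniqueness of Fourier coefficients yields $\hat f_N\,\hat\mu(-N)=0$ for every $|N|\ge 2$. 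Varying $\mu$ over Frostman measures on $h$-thick Borel subsets of $E$ kills $\hat f_N$ for every $|N|\ge 2$, after which the pointwise Abel vanishing of the trigonometric polynomial $\hat f_1 e^{ix}+\hat f_{-1}e^{-ix}$ on the infinite set $E$ forces $\hat f_{\pm 1}=0$ as well.

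The main obstacle is the limit interchange in the Mangoldt identity applied to $u_r(\cdot+y)$: one must dominate the double sum over primes and prime powers uniformly for $r$ close to $1$. This requires the dyadic Zygmund/Parseval estimates combined with Abel summation and Cauchy--Schwarz, and the $\ln^2$ factor that surfaces in this control is the very reason the hypothesis on $h$ carries the $\ln^2(e/x)$ weight; matching these two estimates is the technical heart of the argument.
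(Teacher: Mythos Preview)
Your proposal assembles the right ingredients --- Theorem~6, a Frostman measure on $E$, the Zygmund dyadic bound, and Cauchy--Schwarz against the Sobolev estimate $\sum|\hat f_N|^2|N|^\alpha<\infty$ --- but the order of operations is off in a way that creates a real gap. You try to establish the \emph{pointwise} Abel vanishing of $G(y)=\sum \hat f_N\ln|N|\,e^{iNy}$ on $E$ before ever touching the Frostman measure, invoking ``the Zygmund estimate together with the Sobolev bound'' to justify interchanging $\lim_{r\to 1^-}$ with the double sum $\sum_p\ln p\sum_n$. But the Zygmund estimate in the preliminaries is a statement about the Fourier coefficients of a BV function with modulus of continuity $h$; it applies to $g(x)=\mu(x)-x/2\pi$, not to $f$ or to $u_r$. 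Pointwise in $y$ you have no mechanism to dominate the tail of $\sum_p\ln p\sum_n p^{-n}\sum_k u_r(y+2\pi k/p^n)$ uniformly in $r$, so the interchange is unjustified. (Your separate averaging argument for $\hat f_0=0$ has a similar problem: after sending $r\to 1^-$ you need $\sum_{j\neq 0}\hat f_{jp^n}e^{ijp^n y}\to 0$ as $n\to\infty$, and the Sobolev bound alone does not give absolute summability of the subseries.)

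The paper avoids this by integrating the Mangoldt identity against the Frostman measure $\mu$ \emph{before} taking any limit. The right-hand side then becomes $\sum_{|n|\ge 2} r^{|n|}\hat f_n\,\hat\mu(-n)\,\ln|n|$, which \emph{is} bounded uniformly in $r$ by exactly the Cauchy--Schwarz you describe: Sobolev on $\hat f$, Zygmund on $\hat g$ (this is where the hypothesis $\int_0^1 h(x)x^{\alpha-2}\ln^2(e/x)\,dx<\infty$ is consumed, via $\sum_j j^2 2^{(1-\alpha)j}h(2^{-j})<\infty$). On the left every integrated Poisson term vanishes on $E$, leaving only $-\hat f_0\cdot\frac{1}{\pi}\sum_p\ln p\sum_n 1$, and the divergence of $\sum_p\ln p$ forces $\hat f_0=0$. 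Finally, the paper dispatches the remaining coefficients in one line by replacing $f$ with $e^{inx}f(x)$, which satisfies all the hypotheses and has zeroth coefficient $\hat f_{-n}$; this is far cleaner than your route through translated measures $\mu_q$ and ``varying $\mu$ over Frostman measures'' to kill possible zeros of $\hat\mu$.
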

\begin{proof}
By O. Frostman's theorem, see \cite{Car}, p. 14, there is a 
probability measure $d\mu$ such that $supp(d\mu)\subseteq E$,
\begin{equation*}
M_{h}(supp(\mu)) > 0.
\end{equation*}
and for each $0<\delta$ the inequality
\begin{equation*}
\int_{[x,\,x+\delta]}d\mu \leq A h(\delta)
\end{equation*}
hold. Let us assume at the points $0$ and $2\pi$ the function $\mu$ is 
continuous and $\mu(0)+1=\mu(2\pi)$. Denote 
\begin{equation*}
f(re^{ix})=\sum_{n=-\infty}^{\infty}r^{|n|}\hat{f}_ne^{inx}.
\end{equation*}
Then we have
\begin{equation*}
\frac{1}{\pi}\sum_{p\in P}\ln p 
\left(\sum_{n=1}^{\infty}\left[\frac{1}{p^n}
\sum_{k=1}^{p^n}\int_E f\left(r\exp\left\{\frac{2\pi i k}{p^n}+ix\right\}\right)d\mu(x)
-\hat{f}_0\right]\right)=
\end{equation*}
\begin{equation*}
=\sum_{n=2}^{\infty}r^n\left[\hat{f}_n\int_Ee^{inx}d\mu(x)+
\hat{f}_{-n}\int_Ee^{-inx}d\mu(x)\right]\ln n=
\end{equation*}
\begin{equation*}
=2\pi i\sum_{n=2}^{\infty}
\left(\hat{f}_n\hat{g}_{-n}-\hat{f}_{-n}\hat{g}_{-n}\right)r^nn\ln n.
\end{equation*}
where
\begin{equation*}
g(x)=\mu(x)-\frac{x}{2\pi}.
\end{equation*}
Since the function $f(e^{ix})$ vanish on the set $E$ so, we have
\begin{equation*}
\left|\frac{1}{\pi}\sum_{p\in P} \hat{f}_0\ln p \right|\leq 
2\sum_{n=2}^{\infty}(|\hat{f}_{-n}||\hat{g}_n|+|\hat{f}_{n}||\hat{g}_{-n}|)n\ln n.
\end{equation*}
Let us note
\begin{equation*}
\sum_{n=2}^{\infty}|\hat{f}_n||\hat{g}_{-n}|n\ln n\leq 
\left(\sum_{n=1}^{\infty}|\hat{f}_n|^2n^{\alpha} \right)^{\frac{1}{2}}
\left(\sum_{n=1}^{\infty}|\hat{g}_{-n}|^2n^{2-\alpha}\ln^2 n \right)^{\frac{1}{2}}\leq
\end{equation*}
\begin{equation*}
\leq\left(\sum_{n=1}^{\infty}|\hat{f}_n|^2n^{\alpha} \right)^{\frac{1}{2}}
\left(\sum_{j=1}^{\infty}j^22^{(2-\alpha)j}
\sum_{n=2^j}^{2^{j+1}-1}|\hat{g}_{-n}|^2\right)^{\frac{1}{2}}\leq
\end{equation*}
\begin{equation*}
\leq M \left(\sum_{j=1}^{\infty} j^22^{(1-\alpha)j}h(2^{-j})\right)^{\frac{1}{2}}<\infty.
\end{equation*}
The inequality 
\begin{equation*}
\left|\frac{1}{\pi}\sum_{p\in P}\hat{f}_0 \ln p \right|<\infty
\end{equation*}
valid only if 
\begin{equation*}
\hat{f}_0=0.
\end{equation*}
Considering the functions $e^{inx}f(x),\,\,\,n\in Z$ we prove that $\hat{f}(n)=0,\,\,n\in Z.$
\end{proof}

\end{document}